\journal{Journal of \LaTeX\ Templates}
\newtheorem{lemma}{Lemma}[section]
\newtheorem{theorem}{Theorem}[section]
\newtheorem{corollary}{Corollary}[section]
\newtheorem{proposition}{Proposition}[section]
\begin{document}
	
	\begin{frontmatter}
		
		\title{On a problem of Henning and Yeo about the transversal number of uniform linear systems whose 2-packing number is fixed}

		\author{Carlos A. Alfaro\fnref{1}}
		\fntext[1]{This work was partially supported by SNI and CONACyT}
		\address{Banco de M\'exico}
		\ead{carlos.alfaro@banxico.org.mx}

		\author{Adri\'an V\'azquez-\'Avila\fnref{1}}
		\address{Subdirecci\'on de Ingenier\'ia y Posgrado\\Universidad Aeron\'autica en Quer\'etaro}
		\ead{adrian.vazquez@unaq.edu.mx}
		
		
		
		

\begin{abstract}
 A linear system is a pair $(P,\mathcal{L})$ where
 $\mathcal{L}$ is a family of subsets on a ground finite set $P$ such that $|l\cap l^\prime|\leq 1$, for every $l,l^\prime \in \mathcal{L}$. If all elements of $\mathcal{L}$ of a linear system $(P,\mathcal{L})$, then the linear system is called $r$-uniform linear system. The transversal number of a linear system $(P,\mathcal{L})$, $\tau(P,\mathcal{L})$, is the minimum cardinality of a subset $\hat{P}\subseteq P$ satisfying $l\cap\hat{P}\neq\emptyset$, for every $l\in\mathcal{L}$. The 2-packing number of a linear system $(P,\mathcal{L})$, $\nu_2(P,\mathcal{L})$, is the maximum cardinality of a subset $R\subseteq\mathcal{L}$ such that, any three elements of $R$ don't have a common point (are triplewise disjoint), that is, if three elements are chosen in $R$, then they are not incidents in a common point. 
 
 For $r\geq2$, let $(P,\mathcal{L})$ be an $r$-uniform linear system. In "{\sc M. A. Henning and A. Yeo:}  {\it Hypergraphs with large transversal number,} Discrete Math. {\bf 313} (2013), no. 8, 959--966." Henning and Yeo state the following question: Is it true that if $(P,\mathcal{L})$ is an $r$-uniform linear system then $\tau(P,\mathcal{L})\leq\displaystyle\frac{|P|+|\mathcal{L}|}{r+1}$ holds for all $r\geq2$?. In this note, we give some results of $r$-uniform linear systems, whose 2-packing number is fixed, satisfying the inequality. 

\end{abstract}

\begin{keyword}
Linear systems, 2-packing number, transversal number, finite projective plane.	
\end{keyword}

\end{frontmatter}

\section{Introduction}\label{sec:intro}

A \emph{linear system} is a pair $(P,\mathcal{L})$ where
$\mathcal{L}$ is a family of subsets on a ground finite set $P$ such that $|l\cap l^\prime|\leq 1$, for every pair of distinct subsets $l,l^\prime \in \mathcal{L}$. The linear system $(P,\mathcal{L})$ is \emph{intersecting}, if $|l\cap l^\prime|=1$, for every pair of distinct subsets $l,l^\prime \in \mathcal{L}$. The elements of $P$ and $\mathcal{L}$ are called \emph{points} and \emph{lines}, respectively. An \emph{$r$-line} is a line containing exactly $r$ points. An \emph{$r$-uniform} linear system $(P,\mathcal{L})$ is a linear system such that all lines of $\mathcal{L}$ are $r$-lines. In this context, a \emph{simple graph} is a 2-uniform linear system. Throughout this paper, we will only consider linear systems of rank $r\geq2$.

Let $(P,\mathcal{L})$ be a linear system and $p\in P$ be a point. The set of lines incident to $p$ is denoted by $\mathcal{L}_p$. The \emph{degree} of $p$ is defined as $deg(p)=|\mathcal{L}_p|$ and the \emph{maximum degree} overall points of the linear systems is denoted by $\Delta=\Delta(P,\mathcal{L})$. Two points $p,q\in P$ are \emph{adjacent}, if there is a line $l\in\mathcal{L}$ such that $\{p,q\}\subseteq l$.

A {\emph{linear subsystem}} $(P^{\prime },\mathcal{L}^{\prime })$ of
a linear system $(P,\mathcal{L})$ is a linear system such that, any line $l^\prime\in\mathcal{L}^\prime$ there is a line $l\in\mathcal{L}$ satisfying $l^\prime=l\cap P^\prime$. The \emph{linear subsystem induced} by a set of lines $\mathcal{L}^{\prime}\subseteq \mathcal{L}$ is the linear subsystem $(P^{\prime },\mathcal{L}^{\prime })$, where
$P^{\prime}=\bigcup_{l\in \mathcal{L}^{\prime }} l$. The linear subsystem $(P^\prime,\mathcal{L}^\prime)$ of $(P,\mathcal{L})$ is called \emph{spanning linear subsystem}, if $P^\prime=P$. Given a linear system $(P,\mathcal{L})$ and a point $p\in P$, the linear system obtained from $(P,\mathcal{L})$ by \emph{deleting point $p$} is the linear subsystem $(P^{\prime },\mathcal{L}^{\prime })$ induced by $\mathcal{L}^{\prime }=\{l\setminus \{p\}: l\in \mathcal{L}\}$. On the other hand, given a linear system $(P,\mathcal{L})$ and a line $l\in \mathcal{L}$, the linear system obtained from $(P,\mathcal{L})$ by \emph{deleting the line $l$} is the linear subsystem $(P^{\prime },\mathcal{L}^{\prime })$ induced by $\mathcal{L}^{\prime }=\mathcal{L}\setminus \{l\}$. Finally, let $(P^{\prime},\mathcal{L}^{\prime })$ and $(P,\mathcal{L})$ be two linear systems. We say that $(P^{\prime},\mathcal{L}^{\prime })$ and $(P,\mathcal{L})$ are \emph{isomorphic}, $(P^{\prime },\mathcal{L}^{\prime })\simeq(P,\mathcal{L})$, if after of deleting points of degree 1 or 0 from both, the systems $(P^{\prime},\mathcal{L}^{\prime })$ and $(P,\mathcal{L})$ are isomorphic as hypergraphs, see \cite{MR3727901}.  

Let $(P,\mathcal{L})$ be a linear system. A subset $T\subseteq P$ is a \emph{transversal} of $(P,\mathcal{L})$ (also called \emph{vertex cover} or \emph{hitting set}), if $T\cap l\neq\emptyset$, for every line $l\in\mathcal{L}$. The minimum cardinality of a transversal of a linear system $(P,\mathcal{L})$, $\tau=\tau(P,\mathcal{L})$, is called \emph{transversal number} of $(P,\mathcal{L})$. A subset $R\subseteq \mathcal{L}$ is called a \emph{2-packing} of $(P,\mathcal{L})$, if any three elements are chosen in $R$, then they are not incidents in a common point. The maximum cardinality of a 2-packing of $(P,\mathcal{L})$, $\nu_2=\nu_2(P,\mathcal{L})$, is called \emph{2-packing number} of $(P,\mathcal{L})$. This new parameter has been studied in some papers, see for example \cite{CGCA,CCA,MR3727901,Avila3,Avila2,Avila_dom_gra}. 

In \cite{MR3727901}, was proved a relationship between the transversal and the 2-packing numbers 
\begin{equation}\label{desigualdad}
\lceil \nu_{2}/2\rceil\leq\tau
\leq \frac{\nu_2(\nu_2-1)}{2}.
\end{equation}
Hence, the transversal number of any linear system is upper bounded for a quadratic function of their 2-packing number. For some linear systems the transversal number is bounded above by a linear function of their 2-packing number, see \cite{CGCA,CCA,MR3727901}. In \cite{MR3021333}, was proved, using probabilistic methods, the
existence of $k$-uniform linear systems $(P,\mathcal{L})$ for
infinitely many $k$'s and $n=|P|$ large enough, which transversal
number is $\tau=n-o(n)$. This $k$-uniform linear
systems has 2-packing number upper bounded by $\frac{2n}{k}$.

There are also works where the transversal number of an $r$-uniform linear systems (and in more general context) is bounded above by a function of their number points and lines, see for example \cite{Dorfling,HENNING2013959}. In \cite{HENNING2013959}, was stated the fo\-llowing question: Is it true that if $(P,\mathcal{L})$ is an $r$-uniform linear system, then $\tau\leq\displaystyle\frac{|P|+|\mathcal{L}|}{r+1}$, for all $r\geq2$?. In \cite{VChvCMcD92}, was proved the inequality when $r\in\{2,3\}$. In \cite{Dorfling}, was proved the inequality when $\Delta\leq2$ and there are only two families of $r$-uniform linear systems that achieve equality in the bound. On the other hand, in \cite{Dorfling}, was given a better upper bound for the transversal number in terms of the number of points and the number of lines, namely, it was proved if $(P,\mathcal{L})$ is an $r$-uniform linear system with $\Delta\leq2$ and $r\geq3$ odd integer such that $r(r^2-3)\tau\leq(r-2)(r+1)n+(r-1)^2m+r-1$. Similar bounds was proven in the case when $r\geq2$ is even integer. 

This paper is organized as follows: In Section \ref{sec:Examples}, we present an infinite family of $r$-uniform linear systems $(P,\mathcal{L})$ which satisfy $\tau=\displaystyle\frac{|P|+|\mathcal{L}|}{r+1}$, where $r\geq3$ is an odd integer. This family of linar systems was defined in \cite{CGCA}. In Section \ref{sec:intersecting}, we prove that, if $(P,\mathcal{L})$ is an intersecting $r$-uniform linear system with $\tau=r$, then $\tau\leq\displaystyle\frac{|P|+|\mathcal{L}|}{r+1}$. In Section \ref{sec:nu_2}, we prove, if $(P,\mathcal{L})$ is an $r$-uniform linear systems with $\nu_2\in\{2,3,4\}$, then $\tau\leq\displaystyle\frac{|P|+|\mathcal{L}|}{r+1}$. Finally, in Section \ref{sec:Delta=2}, we prove that, if $(P,\mathcal{L})$ is an $r$-uniform linear systems with $\Delta=2$, then $\tau\leq\displaystyle\frac{|P|+|\mathcal{L}|}{r+1}$, satisfying the equality, if and only, if $(P,\mathcal{L})$ is an $(\nu_2-1)$-uniform linear systems with $\nu_2\geq2$ being even integer. This results was obteined first in \cite{Dorfling}. 
\section{Examples of linear systems $(P,\mathcal{L})$ with $\tau=\displaystyle\frac{|P|+|\mathcal{L}|}{r+1}$}\label{sec:Examples}

Let $(\Gamma,+)$ be an additive Abelian group, with neutral element $e$, satisfying $\sum_{g\in\Gamma}g=e$ and $2g\neq e$, for all $g\in\Gamma\setminus\{e\}$. An example of this type of groups is $(\mathbb{Z}_n,+)$, with $n\geq3$ odd integer.

Let $n=2k+1$, with $k$ a positive integer; and let $(\Gamma,+)$ be an additive Abelian group of order $n$ as above. In \cite{CGCA}, was defined the following linear system $\mathcal{C}_{n,n+1}=(P_n,\mathcal{L}_n)$, where  $$P_n=(\Gamma\times\Gamma\setminus\{e\})\cup\{p,q\}\quad\!\!\!\!\mbox{and}\quad\!\!\!\!\mathcal{L}_n=\mathcal{L}\cup\mathcal{L}_p\cup\mathcal{L}_q,$$with $$\mathcal{L}=\{L_g:g\in\Gamma\setminus\{e\}\},\mbox{ and } L_g=\{(h,g):h\in\Gamma\},$$for $g\in\Gamma\setminus\{e\}$, and $$\mathcal{L}_{p}=\{l_{p_g}:g\in\Gamma\},\mbox{ with } l_{p_g}=\{(g,h):h\in\Gamma\setminus\{e\}\}\cup\{p\},$$ for $g\in\Gamma$, and $\mathcal{L}_q=\{l_{q_g}:g\in\Gamma\}$, and $$l_{q_g}=\{(h,f_g(h)):h\in
\Gamma, f_g(h)=h+g \mbox{ with $f_g(h)\neq e$}\}\cup\{q\},$$ for $g\in\Gamma$. 

The set of lines $\mathcal{L}$ is a set of pairwise disjoint lines with $|\mathcal{L}|=n-1$ and each line of $\mathcal{L}$ has $n$ points. The set of lines $\mathcal{L}_p$ and $\mathcal{L}_q$ are lines incidents to $p$ and $q$, respectively, with $|\mathcal{L}_p|=|\mathcal{L}_p|=n$, and each line of $\mathcal{L}_p\cup\mathcal{L}_q$ has $n$ points. This linear system is an  $n$-uniform linear system with $n(n-1)+2$ points and $3n-1$ lines. Moreover, this linear system has 2 points of degree $n$ (points $p$ and $q$) and $n(n-1)$ points of degree $3$. In \cite{CGCA}, was proved the following:
\begin{theorem}\cite{CGCA}\label{thm:paper_anterior}
	The linear system $\mathcal{C}_{n,n+1}$ satisfies	
	$$\tau(\mathcal{C}_{n,n+1})=\nu_2(\mathcal{C}_{n,n+1})=n+1.$$
\end{theorem}
As a consequence of Theorem \ref{thm:paper_anterior}, we have 
\begin{corollary}
Let $(P,\mathcal{L})$ be an $r$-uniform linear system such that $(P,\mathcal{L})\simeq\mathcal{C}_{n,n+1}$, where $r\geq n$, then $\tau\leq\displaystyle\frac{|P|+|\mathcal{L}|}{r+1}$. Moreover, the equality holds, if and only if, $(P,\mathcal{L})=\mathcal{C}_{n,n+1}$.
\end{corollary}
\textit{Proof.}
Let $(P,\mathcal{L})$ be an $r$-uniform linear system such that $(P,\mathcal{L})\simeq\mathcal{C}_{n,n+1}$. Then $|P|=n(n-1)+2+k|\mathcal{L}|$, where $n+k=r$ with $k\geq0$, and $|\mathcal{L}|=3n-1$. Hence
\begin{eqnarray*}
\frac{|P|+|\mathcal{L}|}{r+1}&=&\frac{n(n-1)+2+(3n-1)(k+1)}{n+k+1}\\&=&\frac{(n-1)(n+k+1)+2(n(k+1)+1)}{n+k+1}\\&=&(n+1)+\frac{2k(n-1)}{n+k+1}\\
&\geq&n+1.	
\end{eqnarray*}
Hence, by Theorem \ref{thm:paper_anterior}, we have $\tau\leq\displaystyle\frac{|P|+|\mathcal{L}|}{n+1}$. The equality holds, if and only if, $k=0$, that is, if and only if $(P,\mathcal{L})=\mathcal{C}_{n,n+1}$.\qed

\begin{theorem}\label{thm:main}
If $r\geq2$ is an positive integer and $(P,\mathcal{L})$ is an $r$-uniform linear system with $\Delta\geq\nu_2-1$, $|\mathcal{L}|\geq\nu_2+\Delta-2$ and $\Delta\geq3$, then $\nu_2-1\leq\displaystyle\frac{|P|+|\mathcal{L}|}{r+1}$.
\end{theorem}
\textit{Proof.}
Since $|P|\geq\Delta(r-1)+1$, we have
\begin{eqnarray*}
	\frac{|P|+|\mathcal{L}|}{r+1}&\geq&\frac{\Delta(r-1)+1+\nu_2+\Delta-2}{r+1}\\
	&=&\frac{r\Delta+\nu_2-1}{r+1}\geq\frac{(\nu_2-1)(r+1)}{r+1}\\
	&=&\nu_2-1.
\end{eqnarray*}
And the statement holds.\qed	

\begin{corollary}\label{coro:main_1}
If $r\geq2$ is an positive integer and $(P,\mathcal{L})$ is an $r$-uniform linear system with $\Delta\geq\nu_2$, $|\mathcal{L}|\geq\nu_2+\Delta-1$ and $\Delta\geq3$, then $\nu_2\leq\displaystyle\frac{|P|+|\mathcal{L}|}{r+1}$.	
\end{corollary}
\textit{Proof.} The proof of this corollary is analogous to the proof of Theorem \ref{thm:main}.\qed

\section{Intersecting $r$-uniform linear systems}\label{sec:intersecting}
Through this paper, all linear systems $(P,\mathcal{L})$ are considered with $|\mathcal{L}|>\nu_2$ due to the fact $|\mathcal{L}|=\nu_2$, if and only if, $\Delta\leq2$. 

In \cite{CGCA}, was proved that any linear system $(P,\mathcal{L})$ with ``few'' lines satisfies $\tau\leq\nu_2-1$.

\begin{theorem}\cite{CGCA}\label{thm:casicucarachageneral}
Let $(P,\mathcal{L})$ be a linear system with $p,q\in P$ be two points such that $deg(p)=\Delta$ and $deg(q)=\max\{deg(x): x\in P\setminus\{p\}\}$. If $|\mathcal{L}|\leq \Delta+deg(q)+\nu_2-3$, then $\tau\leq\nu_2-1$.
\end{theorem}

If $(P,\mathcal{L})$ is an intersecting $r$-uniform linear system, then $\tau\leq r$ and $\Delta\leq r$. The proof of Lemma \ref{lemma:Delta=r} and Lemma \ref{lemma:puntos_projectivo} are analogous to the proof of Lemma 2.4 and Lemma 2.5, given in \cite{Dong}.

\begin{lemma}\label{lemma:Delta=r}
Let $(P,\mathcal{L})$ be an intersecting $r$-uniform linear system, with $r\geq3$. If $\tau=r$, then every line of $(P,\mathcal{L})$ has at most one point of degree two and $\Delta=r$.
\end{lemma}
\text{Proof.}\cite{Dong}
First, we show that every line of $\mathcal{L}$ has at most one point of degree two. Let us suppose that there a line $l\in\mathcal{L}$ and two points $p,q\in l$, such that $deg(p)=deg(q)=2$, which implies $(\mathcal{L}_p\cap\mathcal{L}_q)\setminus\{l\}=\{x\}$. Hence, the following set $\hat{T}=(l\setminus\{p,q\})\cup\{x\}$ is a transversal of $(P,\mathcal{L})$ of cardinality $r-1$, which is a contradiction. Therefore, every line of $(P,\mathcal{L})$ has at most one double point.
	
On the other hand, let us suppose that $\Delta=r-a$, where $a\geq1$. Then $|P|\leq\tau\Delta=r(r-a)$. Let $p\in P$ such that $\Delta=deg(p)$, then $|N(p)|=\Delta(r-1)=(r-a)(r-1)$, where $N(p)=\{q\in P\setminus\{p\}:\{p,q\}\in l\in\mathcal{L}_p\}$. Since $P\setminus N(p)$ is a transversal of $(P,\mathcal{L})$, then $\tau\leq|P\setminus N(p)|=r(r-a)-(r-a)(r-1)=r-a$, which is a contradiction. Therefore, $\Delta=r$.\qed
	
\begin{lemma}\label{lemma:puntos_projectivo}
	Let $(P,\mathcal{L})$ be an intersecting $r$-uniform linear system, with $r\geq3$. If $\tau=r$, then $3(r-1)|\leq|\mathcal{L}|\leq r^2-r+1$ and $|P|=r^2-r+1$.
\end{lemma}
\textit{Proof.}\cite{Dong}
	Since $(P,\mathcal{L})$ is an intersecting $r$-uniform linear system, we have $|\mathcal{L}|=\displaystyle\sum_{p\in l}deg(p)-(r-1)$. Hence, by Lemma \ref{lemma:Delta=r}, we have $3(r-1)\leq|\mathcal{L}|\leq r^2-r+1$.
	
	To show that $|P|=r^2-r+1$, let $p\in P$ such that $\Delta=deg(p)$. Then $|P|\geq|N(p)\cup\{p\}|\geq\Delta(r-1)+1=r^2-r+1$. Let us suppose that $|P|\geq r^2-r+2$, then there exists $q\in P\setminus\{N(p)\cup\{p\}\}$, such that $l_q\cap l_p\neq\emptyset$, for every $l_q\in\mathcal{L}_q$ and $l_q\in\mathcal{L}_p$. Hence, we have $|l_q|\geq deg(p)+1=r+1$, which is a contradiction. Therefore $|P|\leq r^2-r+1$. \qed
	    
The proof of Lemma \ref{lemma:nu_2=r+1} and Lemma \ref{lemma:nu_2=r}, are analogous to the proof of Lemma 4.1, given in \cite{Avila3}, and Lemma 6, given in \cite{Avila2}, respectively. 

\begin{lemma}\label{lemma:nu_2=r+1}
Let $(P,\mathcal{L})$ be an intersecting $r$-uniform linear system, with $r\geq3$ be an odd integer. If $\tau=r$, then $\nu_2=r+1$.
\end{lemma}
\textit{Proof.}\cite{Avila3}
Let $(P,\mathcal{L})$ be an intersecting $r$-uniform linear system. Since $\nu_2-1\leq|l|$, for any $l\in\mathcal{L}$, then $\nu_2\leq r+1$.	
Let $p,q\in P$ be two points such that $deg(p)=\Delta$ and $\Delta^\prime=deg(q)=\max\{deg(x): x\in
P\setminus\{p\}\}$. By Theorem \ref{thm:casicucarachageneral}, if $|\mathcal{L}|\leq\Delta+\Delta^\prime+\nu_2-3\leq3(r-1)+1$, then $\tau\leq\nu_2-1\leq r$, which implies that $\nu_2=r+1$, since $\tau=r$. On the other hand, if $|\mathcal{L}|\geq3(r-1)+2$ and $\tau=\nu_2$, then by Theorem \ref{thm:casicucarachageneral} we have $|\mathcal{L}|\geq\Delta+\Delta^\prime+\nu_2-2\geq3(r-1)+1$, which implies that $\nu_2\geq r+1$. Therefore $\nu_2=r+1$.\qed

\begin{corollary}\label{coro:nu_2=r+1}
Let $r\geq3$ be an odd integer. If $(P,\mathcal{L})$ is an intersecting $r$-uniform linear system with $\tau=r$, then $\tau\leq\displaystyle\frac{|P|+|\mathcal{L}|}{r+1}$.
\end{corollary}
\textit{Proof.} By Lemma \ref{lemma:Delta=r} and Lemma \ref{lemma:nu_2=r+1} follows that $\Delta=\nu_2-1$. Hence, by Theorem \ref{thm:main} and $\nu_2=r+1$ we have $\tau\leq\displaystyle\frac{|P|+|\mathcal{L}|}{r+1}$.\qed

Now, let's deal the case when $r$ is an even integer. If $(P,\mathcal{L})$ is an intersecting $r$-uniform linear system, then $\nu_2\leq r+1$, but, if $r$ is an even integer, then by the lemma given next (given in \cite{Avila2}), Lemma \ref{lemma:impar}, it satisfies	 $\nu_2\leq r$.

\begin{lemma}\cite{Avila2}\label{lemma:impar}
Let $(P,\mathcal{L})$ be an $r$-uniform intersecting linear system with $r\geq2$ be an even integer. If $\nu_2=r+1$ then $\tau=\frac{r+2}{2}$. 
\end{lemma}

\begin{corollary}
Let $(P,\mathcal{L})$ be an $r$-uniform intersecting linear system with $r\geq2$ be an even integer. If $\nu_2=r+1$ then $\tau\leq\displaystyle\frac{|P|+|\mathcal{L}|}{r+1}$. 	
\end{corollary}
\textit{Proof.} It is not difficult to prove that $\Delta=2$, see \cite{Avila2}. Hence, by Corollary \ref{coro:principal_2} we have $\tau\leq\displaystyle\frac{|P|+|\mathcal{L}|}{r+1}$.\qed

Hence, if $\tau>\frac{r+2}{2}$ then $\nu_2\leq r$ and $r\geq4$ an even integer.

\begin{lemma}\label{lemma:nu_2=r}
Let $(P,\mathcal{L})$ be an intersecting $r$-uniform linear system, with $r\geq4$ be an even integer. If $\tau=r$, then $\nu_2=r$.
\end{lemma}

\textit{Proof.}\cite{Avila2}
Let $(P,\mathcal{L})$ be an intersecting $r$-uniform linear system , with $r\geq6$ be an even integer, then $\nu_2\leq r+1$. If $\nu_2=r+1$, then by Lemma \ref{lemma:impar} it follows that $\tau=\frac{r+2}{2}$, which is a contradiction, since $\tau=r$. Therefore $\nu_2\leq r$.
	
Let $p,q\in P$ be two points such that $deg(p)=\Delta$ and $\Delta^\prime=deg(q)=\max\{deg(x): x\in
P\setminus\{p\}\}$. By Theorem \ref{thm:casicucarachageneral}, if $|\mathcal{L}|\leq\Delta+\Delta^\prime+\nu_2-3\leq3(r-1)$, then $\tau\leq\nu_2-1$, which implies $r+1\leq\nu_2$, a contradiction, since $\nu_2\leq r$. Hence, as $\tau\geq\nu_2$ then $|\mathcal{L}|\geq\Delta+\Delta^\prime+\nu_2-2\geq3(r-1)+1$, which implies $\nu_2\geq r$. Hence $\nu_2=r$.\qed
	
\begin{corollary}\label{coro:nu_2=r}
Let $r\geq4$ be an even integer. If $(P,\mathcal{L})$ is an intersecting $r$-uniform linear system with $\tau=r$, then $\tau\leq\displaystyle\frac{|P|+|\mathcal{L}|}{r+1}$.
\end{corollary}
\textit{Proof.} By Lemma \ref{lemma:Delta=r} and Lemma \ref{lemma:nu_2=r} follows that $\Delta=\nu_2=r$. Hence, by Corollary \ref{coro:main_1} we have $\tau\leq\displaystyle\frac{|P|+|\mathcal{L}|}{r+1}$.\qed

By Corollary \ref{coro:nu_2=r} and Corollary \ref{coro:nu_2=r+1}, we have

\begin{theorem}
Let $r\geq3$ be an integer. If $(P,\mathcal{L})$ is an intersecting $r$-uniform linear system with $\tau=r$, then $\tau\leq\displaystyle\frac{|P|+|\mathcal{L}|}{r+1}$.	
\end{theorem}
	
A \emph{finite projective plane}, or merely \emph{projective plane}, is an intersecting linear system satisfying:
\begin{itemize}
	\item any pair of points have a common line,
	\item any pair of lines have a common point, and
	\item  there exist four points in general
	position (there are not three collinear points).
\end{itemize}
It is well known, if $(P,\mathcal{L})$ is a projective plane, there exists a number $q\in\mathbb{N}$, called \emph{order of projective plane}, such that every point (line, respectively) of $(P,\mathcal{L})$ is incident to exactly $q+1$ lines (points, respectively), and $(P,\mathcal{L})$ contains exactly $q^2+q+1$ points (lines, respectively). Also, it is well known that projective planes of order $q$, denoted by $\Pi_q$, exist when $q$ is a power prime. For more information about the existence and the unicity of projective planes see, for instance, \cite{B86,B95}.

In relation to the transversal number of projective planes, it is well known that every line in $\Pi_q$ is a minimum transversal, then $\tau(\Pi_q)=q+1$. On the other hand, related to the 2-packing number of a projective planes, since
projective planes are dual systems, this parameter coincides with
the cardinality of an \emph{oval}, which is the maximum number of
points in general position (no three of them collinear), and it is
equal to $q+1$ when $q$ is odd integer, and it is equal to $q+2$ when $q$ is even integer  (see for example \cite{B95}).

Consequently, for projective planes $\Pi_q$ of odd order $q$ we
have that $\tau (\Pi_q)=\nu_2(\Pi_q)=q+1$. On the other hand, for projective planes $\Pi_q$ of even order $q$ we have that $\tau (\Pi_q)=\nu_2(\Pi_q)-1=q$, see \cite{MR3727901}.

\begin{corollary}
	Let $q$ be a prime power, and let $(P,\mathcal{L})$ be an $(q+1)$-uniform linear system such that $(P,\mathcal{L})\simeq\Pi_q$, then $\tau\leq\displaystyle\frac{|P|+|\mathcal{L}|}{q+2}$.
\end{corollary}
\textit{Proof.} The proof is a simple consequence of Theorem \ref{thm:main} and Corollary \ref{coro:main_1}, since $\Delta=q+1$ and $\nu_2=q+2$, if $q$ is an even integer, and $\nu_2=q+1$, if $q$ is odd integer.

\section{$r$-uniform linear systems with $\nu_2\in\{2,3,4\}$}\label{sec:nu_2}
Let $(P,\mathcal{L})$ be an $r$-uniform linear system with $\nu_2\in\{2,3\}$. It is not difficult to prove (see \cite{MR3727901}) that, $\nu_2=2$, if and only if, $\tau=1$. As $|P|=\Delta(r-1)+1$ and $|\mathcal{L}|=\Delta$, then $\tau<\displaystyle\frac{|P|+|\mathcal{L}|}{r+1}$. On the other hand, if $\nu_2=3$, then $\tau=2$, see \cite{MR3727901}. As $|P|\geq\Delta(r-1)+1$ and $|\mathcal{L}|=\Delta+1$, then $\tau\leq\displaystyle\frac{|P|+|\mathcal{L}|}{r+1}$. 

The main result of this section states the following:
\begin{theorem}\label{thm:main_4}
Let $r\geq2$ be an integer and $(P,\mathcal{L})$ be an $r$-uniform linear system with $|\mathcal{L}|>\nu_2$. If $\nu_2=4$, then $\tau\leq\displaystyle\frac{|P|+|\mathcal{L}|}{r+1}$.  
\end{theorem}

To proove the Theorem \ref{thm:main_4}, we assume $\Delta\leq4$, since

\begin{lemma}\cite{MR3727901}\label{lemma:delta_5}
Any linear system $(P,\mathcal{L})$ with $\nu_2=4$ and $\Delta\geq5$ satisfies $\tau\leq\nu_2-1$. 
\end{lemma}

And by Theorem \ref{thm:main}, we have $\tau\leq\displaystyle\frac{|P|+|\mathcal{L}|}{r+1}$.

\begin{lemma}\cite{MR3727901}\label{lema:tau=3,D=3}
Let $(P,\mathcal{L})$ be a linear system with $\nu_2=4$ and
$\Delta=3$. If $(P,\mathcal{L})\simeq\mathcal{C}_{3,4}$, then $\tau=\nu_2$, otherwise $\tau\leq\nu_2-1$. 
\end{lemma}

\begin{corollary}\label{coro;Delta=3}
Let $r\geq2$ be an integer and $(P,\mathcal{L})$ be an $r$-uniform linear system. If $\nu_2=4$ and $\Delta=3$, then $\tau\leq\displaystyle\frac{|P|+|\mathcal{L}|}{r+1}$. And, the equality holds, if and only if, $(P,\mathcal{L})=\mathcal{C}_{3,4}$. 	
\end{corollary}
\textit{Proof.}
 Let $(P,\mathcal{L})$ be an $r$-uniform linear system with $\nu_2=4$ and $\Delta=3$ such that $(P,\mathcal{L})\not\simeq\mathcal{C}_{3,4}$. By Theorem \ref{thm:main} and Lemma \ref{lema:tau=3,D=3}, we have $\tau\leq\displaystyle\frac{|P|+|\mathcal{L}|}{r+1}$. On the other hand, if $(P,\mathcal{L})\simeq\mathcal{C}_{3,4}$ then $|P|=8+8k$ and $|\mathcal{L}|=8$, where $k+3=r$ and $k\geq0$. Hence
\begin{equation*}
\frac{|P|+|\mathcal{L}|}{r+1}=\frac{8(k+2)}{k+4}\geq\frac{16}{4}=4=\tau.
\end{equation*}
Hence, $\tau\leq\displaystyle\frac{|P|+|\mathcal{L}|}{r+1}$. The equality holds, if and only if, $k=0$, that is, if and only if, $(P,\mathcal{L})=\mathcal{C}_{3,4}$\qed	

\begin{lemma}\cite{MR3727901}\label{lema:tau=3,D=4}
If $(P,\mathcal{L})$ is a linear system with $\nu_2=4$ and $\Delta=4$, then $\tau=\nu_2$.
\end{lemma}

\begin{corollary}
Let $r\geq2$ be an integer and $(P,\mathcal{L})$ be an $r$-uniform linear system. If $\nu_2=4$ and $\Delta=4$, then $\tau\leq\displaystyle\frac{|P|+|\mathcal{L}|}{r+1}$.	
\end{corollary}
\textit{Proof.}
The proof is a direct consequence of Corollary \ref{coro:main_1} and Lemma \ref{lema:tau=3,D=4}. \qed	
\section{$r$-uniform linear systems with $\Delta=2$}\label{sec:Delta=2}
In this section, we present some results of $r$-uniform linear systems $(P,\mathcal{L})$ with $\Delta=2$ satisfying $\tau\leq\displaystyle\frac{|P|+|\mathcal{L}|}{r+1}$. 

\begin{proposition}\label{prop:Delta=2}
	If $(P,\mathcal{L})$ is a linear system with $\Delta=2$, then $\tau\leq\nu_2-1$.
\end{proposition}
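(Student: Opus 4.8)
The lower bound requires no new work: it is precisely the left-hand inequality of \eqref{desigualdad}, which is valid for every linear system, so I would simply invoke it. Hence the whole task is the upper bound $\tau\le\nu_2-1$, and the first move is a structural reduction. Because $\Delta=2$, no point lies on three distinct lines, so the defining condition of a $2$-packing (no three lines through a common point) holds vacuously for \emph{every} subfamily of $\mathcal{L}$; consequently $\nu_2=|\mathcal{L}|$. Writing $m=|\mathcal{L}|$, the statement to be proved becomes simply $\tau\le m-1$ (note the displayed chain is consistent only when $\nu_2\ge 2$, i.e. $m\ge 2$, which I assume throughout).

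To prove $\tau\le m-1$ I would pass to an auxiliary graph. Let $G$ be the graph with vertex set $\mathcal{L}$ in which two lines $l,l'$ are joined by an edge exactly when they share a point, which is then necessarily a degree-$2$ point. Since $|l\cap l'|\le 1$ and $\Delta=2$, this is a well-defined simple graph, and each edge $\{l,l'\}$ is labelled by the unique point of $l\cap l'$. The key observation is the dictionary between transversals and edge covers: choosing the point labelling an edge $\{l,l'\}$ hits both $l$ and $l'$ at once, so any set of edges of $G$ that covers all vertices yields, through its labels, a transversal of $(P,\mathcal{L})$ of the same cardinality. It therefore suffices to exhibit an edge cover of $G$ of size at most $m-1$.

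The final step produces one. Assuming the system is connected (equivalently, $G$ is connected), $G$ is a connected graph on $m\ge 2$ vertices and so contains a spanning tree $T$ with exactly $m-1$ edges; as every vertex of a tree on at least two vertices is incident with a tree edge, the $m-1$ labels of $T$ form a transversal, giving $\tau\le m-1=\nu_2-1$. More generally, if one only assumes that $G$ has no isolated vertex — i.e. every line of $(P,\mathcal{L})$ meets some other line — one takes a spanning tree in each component and obtains $\tau\le m-c\le m-1$, where $c\ge 1$ is the number of components.

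I would flag the no-isolated-line hypothesis as the genuine crux rather than a routine detail, because it is truly necessary: a system of pairwise disjoint lines has $\nu_2=m$ but $\tau=m>m-1$, so some connectedness/intersecting assumption on the linear systems under consideration is indispensable. Everything else — the identity $\nu_2=m$ and the spanning-tree construction — is elementary. The content of the proposition is exactly that the degree-$2$ condition, together with that standing assumption, forces the incidence structure to behave like a connected graph, for which a spanning tree always improves on the trivial cover by one point.
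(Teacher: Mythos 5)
Your reduction is sound as far as it goes: with $\Delta=2$ every subfamily of $\mathcal{L}$ is a $2$-packing, so $\nu_2=|\mathcal{L}|=m$; the intersection graph $G$ is well defined, its edge labels are distinct points (a repeated label would force a point of degree at least $3$), and an edge cover of $G$ translates into a transversal. The genuine error comes at the end, where you declare a ``no isolated line'' (or connectivity) hypothesis indispensable and assert that the proposition is false without it. Your would-be counterexample, a system of pairwise disjoint lines, does not satisfy the hypothesis: there every point has degree at most $1$, so $\Delta=1$, not $\Delta=2$. In this paper $\Delta$ is the \emph{maximum} degree, so $\Delta=2$ asserts the existence of a point of degree exactly $2$, i.e.\ that $G$ contains at least one edge. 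You have effectively read the hypothesis as $\Delta\leq2$, and as a result you prove only a weakened statement while wrongly dismissing the stated one.

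Moreover, your own construction closes the gap without any extra hypothesis. Cover each component of $G$ that contains an edge by a spanning tree, spending $m_j-1$ points on its $m_j$ lines, and hit each isolated vertex of $G$ (a line disjoint from all others) with one arbitrary point of that line. If $c_e$ denotes the number of components containing an edge, the resulting transversal has size $m-c_e$, and $\Delta=2$ guarantees $c_e\geq1$, whence $\tau\leq m-1=\nu_2-1$. This is in substance what the paper does, by a different bookkeeping: it takes a maximal set $A$ of pairwise non-adjacent double points (nonempty \emph{because} $\Delta=2$), observes that maximality forces the lines missed by $A$ to be pairwise disjoint, and covers each of those with one point, getting $\tau\leq|A|+(m-2|A|)=m-|A|\leq\nu_2-1$; the lower bound is Equation~(\ref{desigualdad}), exactly as you say. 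Note the paper's $m-|A|$ (an edge-cover-style count via a maximal ``matching'' of labeled points) is in general sharper than your spanning-tree count $m-c_e$, though both suffice here.
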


\textit{Proof.}
Let $A$ be a maximum subset of $P$ such that every $p\in A$ satisfies $deg(p)=2$, and $\{p,q\}\not\subseteq l$, for every $p,q\in A$. Since $\Delta=2$, then $A\neq\emptyset$. Let $\mathcal{L}_A=\bigcup_{p\in A}\mathcal{L}_p$ and  $\mathcal{L}^\prime=\mathcal{L}\setminus\mathcal{L}_A$. Hence, if $\mathcal{L}^\prime\neq\emptyset$, then the set of lines of $\mathcal{L}^\prime$ is pairwise disjoint. Therefore, the following set $T=A\cup B$, where $B=\{p_l:l\in\mathcal{L}^\prime\mbox{ and }p_l\in l\}$, is a transversal of $(P,\mathcal{L})$. Hence, $\tau\leq|T|=|A|+|B|\leq|\mathcal{L}|-1=\nu_2-1$.\qed

\begin{corollary}
	Let $(P,\mathcal{L})$ be a linear system with $\Delta=2$ and let $\mathcal{L}^\prime$ as above. If $|\mathcal{L}^\prime|\leq1$, then $\tau=\lceil \nu_{2}/2\rceil$. Moreover, if $|\mathcal{L}^\prime|=\nu_2-2$, then $\tau=\nu_2-1$.
\end{corollary}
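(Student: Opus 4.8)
The plan is to read both equalities off the construction already built in the proof of Proposition~\ref{prop:Delta=2}, so the first step is to record the identities it produces. Keeping the notation $A$, $\mathcal{L}_A$, $\mathcal{L}'$ of that proof, every point of $A$ is a double point and, since two points of $A$ never share a line, the lines of $\mathcal{L}_A$ split two per point of $A$, giving $|\mathcal{L}_A|=2|A|$. Because $\Delta=2$ no point lies on three lines, so no three lines have a common point; hence \emph{every} subfamily of $\mathcal{L}$ is a $2$-packing and $\nu_2=|\mathcal{L}|=2|A|+|\mathcal{L}'|$. The transversal $T=A\cup B$ constructed there satisfies $|T|=|A|+|\mathcal{L}'|$, because $B$ selects one point from each line of the matching $\mathcal{L}'$ (distinct points, as the lines are disjoint) and $A\cap B=\emptyset$. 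Combining, $\lceil\nu_2/2\rceil=|A|+\lceil|\mathcal{L}'|/2\rceil$.

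For the first assertion I would simply note that $|\mathcal{L}'|\le 1$ gives $\lceil|\mathcal{L}'|/2\rceil=|\mathcal{L}'|$, whence $\tau\le|T|=|A|+|\mathcal{L}'|=\lceil\nu_2/2\rceil$. Together with the lower bound $\lceil\nu_2/2\rceil\le\tau$ supplied by Equation~(\ref{desigualdad}), this forces $\tau=\lceil\nu_2/2\rceil$.

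For the second assertion, $|\mathcal{L}'|=\nu_2-2$ forces $|A|=1$, so the upper bound $\tau\le|T|=1+(\nu_2-2)=\nu_2-1$ is immediate; the substance lies in the matching lower bound $\tau\ge\nu_2-1$. Here I would exploit that $|A|=1$ makes $A$ a \emph{maximum} independent set of double points, i.e.\ every two double points are adjacent. The $\nu_2-2$ lines of $\mathcal{L}'$ are pairwise disjoint, so any transversal has at least $\nu_2-2$ points; assuming a transversal $T$ of exactly that size, each line of $\mathcal{L}'$ is met precisely once, and hence every point of $T$ lies on a unique line of $\mathcal{L}'$. Writing $l_1,l_2$ for the two lines of $\mathcal{L}_A$ and $p=l_1\cap l_2$, covering $l_1$ and $l_2$ then produces points $t=l_1\cap m_i$ and $t'=l_2\cap m_j$ with $m_i,m_j\in\mathcal{L}'$. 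Since $\deg(p)=2$ places $p$ on no line of $\mathcal{L}'$, we get $t\ne t'$ and $m_i\ne m_j$, so $t,t'$ are two double points with no common line, contradicting their forced adjacency. Thus $\tau\ge\nu_2-1$ and $\tau=\nu_2-1$.

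The main obstacle is precisely this last contradiction: one must verify carefully that $t$ and $t'$ are genuinely distinct double points sitting on distinct matching lines (ruling out $t=t'=p$ through $\deg(p)=2$ and $p\notin\bigcup\mathcal{L}'$, and ruling out $m_i=m_j$ through the tightness of the cover). It is exactly the hypothesis $|A|=1$—equivalently, the pairwise adjacency of all double points—that upgrades the cheap estimate $\tau\ge\nu_2-2$ to the required $\tau\ge\nu_2-1$, so this is where the whole argument is won or lost.
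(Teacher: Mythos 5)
Your proof is correct. Note first that the paper states this corollary with no proof at all, treating it as immediate from the construction in Proposition \ref{prop:Delta=2}; your bookkeeping ($|\mathcal{L}_A|=2|A|$ since the points of $A$ are pairwise non-adjacent double points, $\nu_2=|\mathcal{L}|=2|A|+|\mathcal{L}'|$ since $\Delta=2$ forbids three concurrent lines, and $\tau\le|T|=|A|+|\mathcal{L}'|$) is exactly that intended reading, and together with the lower bound of Equation (\ref{desigualdad}) it settles the case $|\mathcal{L}'|\le1$. For the ``moreover'' assertion, however, you correctly identified that the ingredients the paper makes explicit do not suffice: Equation (\ref{desigualdad}) only gives $\tau\ge\lceil\nu_2/2\rceil$, which falls short of $\nu_2-1$ as soon as $\nu_2\ge4$, and the disjointness of the matching $\mathcal{L}'$ only gives $\tau\ge\nu_2-2$. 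Your tightness argument supplies the genuinely missing step: a transversal of size $\nu_2-2$ must meet each matching line exactly once and have every one of its points on a matching line (the incidence count between $T$ and $\mathcal{L}'$ is squeezed between $|\mathcal{L}'|$ and $|T|$), so the points $t\in l_1\cap m_i$ and $t'\in l_2\cap m_j$ covering the two lines through the unique point $p$ of $A$ satisfy $t,t'\ne p$ (since $\deg(p)=2$ keeps $p$ off every line of $\mathcal{L}'$, hence $p\notin T$), $t\ne t'$ (since $l_1\cap l_2=\{p\}$), and $m_i\ne m_j$ (by the exactly-once count); then $\mathcal{L}_t\cap\mathcal{L}_{t'}=\{l_1,m_i\}\cap\{l_2,m_j\}=\emptyset$ exhibits two non-adjacent double points, contradicting the maximality of $A$ with $|A|=1$. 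Every step of this checks out, including the degenerate case $|\mathcal{L}'|=0$, $\nu_2=2$, where the contradiction is immediate. So your proposal does more than match the paper: it proves the second equality, whose lower bound the paper silently omits and which does not follow from Proposition \ref{prop:Delta=2} alone.
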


\begin{corollary}\label{thm:Delta=2}
	If $(P,\mathcal{L})$ is an $r$-uniform linear system with $\Delta=2$ and $\nu_2\geq4$, then $$\lceil \nu_{2}/2\rceil\leq\left\lfloor\displaystyle\frac{|P|+|\mathcal{L}|}{r+1}\right\rfloor\leq\nu_2-1.$$
\end{corollary}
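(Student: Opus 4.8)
The plan is to reduce both inequalities to facts already available, using one structural observation specific to $\Delta=2$. That observation is that $\nu_2=|\mathcal{L}|$: since every point has degree at most $2$, no point lies on three lines, so \emph{every} subfamily $R\subseteq\mathcal{L}$ vacuously satisfies the $2$-packing condition (no three of its lines are concurrent). Hence the maximum $2$-packing is all of $\mathcal{L}$, giving $\nu_2=|\mathcal{L}|$. Throughout I would assume, as is standard, that every point lies on at least one line, since isolated points only inflate $|P|$ and may be discarded.

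For the left-hand inequality I would sandwich the middle quantity through the transversal number. Equation~(\ref{desigualdad}) gives $\lceil\nu_2/2\rceil\leq\tau$, while the result of \cite{Dorfling} (the Henning--Yeo bound in the case $\Delta\leq2$) gives $\tau\leq\frac{|P|+|\mathcal{L}|}{r+1}$. Because $\tau$ is an integer, the latter upgrades to $\tau\leq\big\lfloor\frac{|P|+|\mathcal{L}|}{r+1}\big\rfloor$, and chaining the two bounds yields $\lceil\nu_2/2\rceil\leq\big\lfloor\frac{|P|+|\mathcal{L}|}{r+1}\big\rfloor$.

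For the right-hand inequality a direct incidence count is needed, since the bound $\tau\leq\nu_2-1$ from Proposition~\ref{prop:Delta=2} controls $\tau$ rather than the floor itself. Writing $n_1,n_2$ for the numbers of points of degree $1$ and $2$, double counting incidences gives $n_1+2n_2=r|\mathcal{L}|$ and $|P|=n_1+n_2$, whence $|P|+|\mathcal{L}|=(r+1)|\mathcal{L}|-n_2$ and therefore $\frac{|P|+|\mathcal{L}|}{r+1}=|\mathcal{L}|-\frac{n_2}{r+1}$. Because $\Delta=2$ forces at least one point of degree $2$, we have $n_2\geq1$, so this quantity is strictly less than $|\mathcal{L}|=\nu_2$; taking floors gives $\big\lfloor\frac{|P|+|\mathcal{L}|}{r+1}\big\rfloor\leq\nu_2-1$.

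I expect the two endpoints to come almost for free once one notices $\nu_2=|\mathcal{L}|$, so the real content sits in the left inequality and its reliance on the nontrivial Dörfling bound $\tau\leq\frac{|P|+|\mathcal{L}|}{r+1}$ for $\Delta\leq2$. If one wanted a self-contained argument avoiding that citation, the main obstacle would be that the elementary estimate $n_2\leq r|\mathcal{L}|/2$ only yields $\frac{|P|+|\mathcal{L}|}{r+1}\geq\frac{r+2}{2(r+1)}|\mathcal{L}|$, which can fall short of $\lceil\nu_2/2\rceil$ when $|\mathcal{L}|$ is odd and small relative to $r$; closing that gap is precisely where the sharper inequality of \cite{Dorfling} is required.
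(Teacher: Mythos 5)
Your argument is correct, and on the lower bound it takes a genuinely different route from the paper. The upper bound is essentially the paper's: your identity $|P|+|\mathcal{L}|=(r+1)|\mathcal{L}|-n_2$ together with $n_2\geq1$ is the same as the paper's estimate $|P|\leq r\nu_2-1$, resting on the same observation $\nu_2=|\mathcal{L}|$ (which the paper uses silently); and note that both versions tacitly require $P=\bigcup_{l\in\mathcal{L}}l$, since isolated points would make the upper bound false rather than merely slack, so they must be excluded by convention, not ``discarded.'' For the lower bound the paper stays with direct counting: with $A$ as in Proposition~\ref{prop:Delta=2} and $k=|A|\leq\lfloor\nu_2/2\rfloor$, it asserts $|P|\geq r\nu_2-k$ and computes $\left\lfloor\nu_2-\frac{k}{r+1}\right\rfloor\geq\lceil\nu_2/2\rceil$, whereas you sandwich the floor through the transversal number, chaining Equation~(\ref{desigualdad}) with the theorem of \cite{Dorfling} and the integrality of $\tau$. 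Your route imports a nontrivial external theorem, but it buys correctness: the paper's step $|P|\geq r\nu_2-k$ amounts to claiming that the number $n_2$ of double points is at most $|A|$, which fails as soon as double points are adjacent (three pairwise intersecting lines give $n_2=3$ but $|A|=1$), so your detour through $\tau$ circumvents a genuine gap in the paper's own counting. Finally, the self-contained repair you looked for does exist: besides $n_2\leq r|\mathcal{L}|/2$ one also has $n_2\leq\binom{|\mathcal{L}|}{2}$, since two lines meet in at most one point; this second bound handles exactly the regime where your first estimate fell short, because for $|\mathcal{L}|\leq r+1$ it yields $\frac{|P|+|\mathcal{L}|}{r+1}\geq|\mathcal{L}|\bigl(1-\frac{|\mathcal{L}|-1}{2(r+1)}\bigr)\geq\frac{|\mathcal{L}|+1}{2}$, while for $|\mathcal{L}|>r+1$ your bound gives $\frac{|P|+|\mathcal{L}|}{r+1}>\frac{|\mathcal{L}|+1}{2}$; in either case the floor is at least $\lceil\nu_2/2\rceil$ with no appeal to \cite{Dorfling}.
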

\textit{Proof.}
Let $A$ as in the Proof of Proposition \ref{prop:Delta=2}. If $|A|=k$, where $1\leq k\leq\nu_2(\nu_2-1)/2$, then $ r\nu_2-k\leq|P|\leq r\nu_2-1$. Hence
$$\left\lfloor\frac{|P|+|\mathcal{L}|}{r+1}\right\rfloor\leq
\left\lfloor\nu_2-\frac{1}{r+1}\right\rfloor=\nu_2-1.$$

\

On the other hand, since $|P|\geq r\nu_2-k$ then $$\left\lfloor\frac{|P|+|\mathcal{L}|}{r+1}\right\rfloor\geq
\left\lfloor\nu_2-\frac{k}{r+1}\right\rfloor\geq\left\lfloor\nu_2-\frac{\nu_2(\nu_2-1)/2}{r+1}\right\rfloor\geq\lceil \nu_{2}/2\rceil,$$and the statement holds.\qed

In \cite{Dorfling}, was proved the following: 

\begin{theorem}\cite{Dorfling}
	If $(P,\mathcal{L})$ is a linear system with $\Delta=2$, then $$\tau\leq\displaystyle\frac{|P|+|\mathcal{L}|}{r+1}.$$ 	
\end{theorem}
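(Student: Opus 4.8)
The plan is to reduce the statement to a single graph-theoretic inequality and then settle that inequality with an edge-colouring argument. First I would record two identities that hold because $\Delta=2$. Since no point lies on three lines, every triple of lines has empty common intersection, so the whole family $\mathcal{L}$ is a $2$-packing and $\nu_2=|\mathcal{L}|$. Counting incidences gives $\sum_{p\in P}\deg(p)=r|\mathcal{L}|$, and as every degree is $1$ or $2$ we obtain $n_1+2n_2=r|\mathcal{L}|$ together with $|P|=n_1+n_2$, where $n_2$ denotes the number of double points; hence $|P|=r|\mathcal{L}|-n_2$. Since $|P|+|\mathcal{L}|=(r+1)\nu_2-n_2$, the target bound reads $\tau\le\nu_2-\frac{n_2}{r+1}$. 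On the other hand, the set $A$ of pairwise non-adjacent double points in the proof of Proposition \ref{prop:Delta=2} has $|\mathcal{L}_A|=2|A|$ and yields a transversal $T=A\cup B$ with $|B|=|\mathcal{L}|-2|A|$, so $\tau\le\nu_2-k$ where $k=|A|$. Thus it suffices to prove $n_2\le(r+1)k$.

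The key step is exactly this inequality, and the natural way to see it is to pass to the \emph{intersection graph} $H$ whose vertices are the lines of $\mathcal{L}$ and whose edges are the double points: a double point lies on precisely two lines, and linearity forbids two lines from sharing two points, so $H$ is a simple graph with $|V(H)|=\nu_2$ and $|E(H)|=n_2$. A line has $r$ points and therefore at most $r$ double points, so $\Delta(H)\le r$; moreover a set of pairwise non-adjacent double points is exactly a matching of $H$, whence $k=\nu(H)$ is the matching number. The whole theorem then comes down to showing that every graph $H$ with $\Delta(H)\le r$ satisfies $|E(H)|\le(r+1)\,\nu(H)$.

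I expect the main obstacle to be resolving this last inequality, since the obvious degree and vertex-cover estimates only deliver the weaker bound $|E(H)|\le 2r\,\nu(H)$. The clean way around it is Vizing's theorem: a graph of maximum degree $\Delta(H)\le r$ has chromatic index at most $r+1$, so its edge set partitions into at most $r+1$ matchings, each of cardinality at most $\nu(H)=k$; summing over colour classes gives $n_2=|E(H)|\le(r+1)k$ immediately. This estimate is sharp (disjoint copies of $K_{r+1}$ with $r$ even give equality), which is consistent with the bound of the Theorem being attained. Finally I would assemble the pieces: from $\tau\le\nu_2-k$ and $k\ge n_2/(r+1)$,
\[
\tau\le\nu_2-\frac{n_2}{r+1}=\frac{(r+1)\nu_2-n_2}{r+1}=\frac{|P|+|\mathcal{L}|}{r+1},
\]
using $\nu_2=|\mathcal{L}|$ and $n_2=r|\mathcal{L}|-|P|$, which is precisely the claimed inequality.
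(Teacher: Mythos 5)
Your proof is correct, and it necessarily differs from the paper, because the paper contains no proof of this statement at all: the theorem is imported from Dorfling and Henning \cite{Dorfling}, whose published argument sits inside a longer analysis of $\Delta\leq2$ linear hypergraphs that also yields the sharper, degree-refined bounds quoted in the introduction (e.g.\ $r(r^2-3)\tau\leq(r-2)(r+1)|P|+(r-1)^2|\mathcal{L}|+r-1$ for odd $r\geq3$). Your route instead sharpens the paper's own Proposition~\ref{prop:Delta=2}: where that proof discards information and concludes only $\tau\leq|A|+|B|\leq\nu_2-1$, you keep the exact count $|B|=|\mathcal{L}|-2|A|$, getting $\tau\leq\nu_2-k$ with $k=|A|$, and then translate everything into the intersection graph $H$ (vertices the lines, edges the double points; simple precisely because the system is linear, $\Delta(H)\leq r$ by $r$-uniformity, and pairwise non-adjacent double points corresponding exactly to matchings, so $k=\nu(H)$). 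The theorem then collapses to the single inequality $|E(H)|\leq(r+1)\nu(H)$ for graphs of maximum degree at most $r$, which Vizing's theorem settles by partitioning $E(H)$ into at most $r+1$ colour classes; your observation that naive covering arguments only give $2r\,\nu(H)$ correctly identifies Vizing as the essential ingredient, and your sharpness example (disjoint copies of $K_{r+1}$, $r$ even) matches the fact, noted after the theorem, that equality forces very special systems. What your argument buys is a short, self-contained proof making transparent that the $\Delta\leq2$ case of the Henning--Yeo question is exactly the classical edge-versus-matching inequality; what the cited machinery buys instead is the stronger refined bounds, which a one-shot Vizing argument cannot reach.

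Two small points to fix in the write-up. First, $\Delta=2$ does not exclude points of degree $0$, so your identity $|P|=r|\mathcal{L}|-n_2$ should be the inequality $|P|\geq r|\mathcal{L}|-n_2$ (or assume, harmlessly, that every point lies on a line); since isolated points only increase the right-hand side of the theorem, nothing is lost. Second, the theorem as printed says ``linear system'' but the bound involves $r$, so $r$-uniformity must be assumed, as you tacitly did; without it the incidence count $\sum_{p\in P}\deg(p)=r|\mathcal{L}|$ fails.
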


As a simple consequence, since $\tau\in\mathbb{N}$, then $\tau\leq\left\lfloor\frac{|P|+|\mathcal{L}|}{r+1}\right\rfloor$.

\begin{theorem}
If $(P,\mathcal{L})$ is an $r$-uniform linear system with $\nu_2-1\leq r$, then $$\lceil \nu_{2}/2\rceil\leq\frac{|P|+|\mathcal{L}|}{r+1}.$$
\end{theorem}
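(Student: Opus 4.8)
The plan is to bound both $|P|$ and $|\mathcal{L}|$ from below in terms of $\nu_2$, and then to reduce the desired inequality to an elementary algebraic fact that consumes the hypothesis $\nu_2-1\le r$. Write $\nu=\nu_2$ and let $R\subseteq\mathcal{L}$ be a maximum $2$-packing, so $|R|=\nu$ (assuming at least one line, so $\nu\ge1$; the empty case is trivial). Since $R$ is a subfamily of $\mathcal{L}$ we immediately get $|\mathcal{L}|\ge\nu$. The substantive work is to estimate $|P|$.

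The key step is a lower bound on $|P|$ coming from the points lying on the lines of $R$. Because $R$ is a $2$-packing, every point of $P$ is incident to at most two lines of $R$; call a point a \emph{double point} (relative to $R$) if it lies on exactly two of them. Counting incidences between points and the lines of $R$ gives $r\nu=s+2d$, where $s$ and $d$ are the numbers of single and double points, so the number of points met by $R$ equals $s+d=r\nu-d$. The crucial observation I would use is that linearity forces $d\le\binom{\nu}{2}$: two distinct double points lying on the same pair of lines of $R$ would give those two lines two common points, contradicting $|l\cap l'|\le1$. Hence $|P|\ge r\nu-\binom{\nu}{2}$. Note that under $\nu-1\le r$ this improves on the trivial estimate $|P|\ge r\nu/2$, and it is exactly this improvement that makes the argument go through.

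Combining the two bounds yields $|P|+|\mathcal{L}|\ge r\nu-\binom{\nu}{2}+\nu=\nu(r+1)-\tfrac{\nu(\nu-1)}{2}$, so that $\frac{|P|+|\mathcal{L}|}{r+1}\ge\nu-\frac{\nu(\nu-1)}{2(r+1)}$. It then suffices to show the right-hand side is at least $\lceil\nu/2\rceil$. Using the bound $\lceil\nu/2\rceil\le\frac{\nu+1}{2}$, this reduces, after multiplying through by $2(r+1)$ and collecting terms, to $(\nu-1)(r+1-\nu)\ge0$, which holds because $\nu\ge1$ and $\nu\le r+1$ by hypothesis. A pleasant feature is that this single inequality covers both parities of $\nu$ at once, so no case split on $\lceil\nu/2\rceil$ is needed.

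I expect the main obstacle to be pinning down the sharp lower bound on $|P|$ rather than the final algebra. The naive estimate $|P|\ge r\nu/2$ is too weak in the odd-$\nu$ regime and would only settle the boundary case $\nu=r+1$; the linearity-based bound $d\le\binom{\nu}{2}$ is what lets the hypothesis $\nu\le r+1$ be used with full strength. Once that bound is established, the remaining manipulation is routine and the inequality $(\nu-1)(r+1-\nu)\ge0$ closes the proof.
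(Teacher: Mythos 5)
Your proof is correct and follows essentially the same route as the paper: both hinge on the lower bound $|P|\ge r\nu_2-\binom{\nu_2}{2}$ obtained by counting points (with at most $\binom{\nu_2}{2}$ double points) on a maximum $2$-packing, and then finish with algebra that uses the hypothesis $\nu_2-1\le r$. The only real difference is in the finishing details---the paper uses $|\mathcal{L}|\ge\nu_2+\Delta-2$ where you use the weaker $|\mathcal{L}|\ge\nu_2$, and your exact reduction to $(\nu_2-1)(r+1-\nu_2)\ge0$ is actually the tidier one, since the paper's last inequality $\frac{\nu_2+1}{2}+\frac{\Delta-2}{r+1}\ge\lceil\nu_2/2\rceil$ tacitly needs $\Delta\ge2$ when $\nu_2$ is odd, a case your parity-free argument covers automatically.
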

\textit{Proof.}
Since $|P|\geq r\nu_2-\frac{\nu_2(\nu_2-1)}{2}$ and $|\mathcal{L}|\geq\Delta+\nu_2-2$, then
\begin{eqnarray*}
	\frac{|P|+|\mathcal{L}|}{r+1}&\geq&\frac{r\nu_2-\frac{\nu_2(\nu_2-1)}{2}+\nu_2+\Delta-2}{r+1}\\
	&=&\nu_2\left[1-\frac{\nu_2-1}{2(r+1)}\right]+\frac{\Delta-2}{r+1}
\end{eqnarray*} 
Since $\nu_2-1\leq r$ then
\begin{eqnarray*}
	\frac{|P|+|\mathcal{L}|}{r+1}&\geq&\nu_2\left[1-\frac{\nu_2-1}{2\nu_2}\right]+\frac{\Delta-2}{r+1}\\
	&=&\frac{\nu_2+1}{2}+\frac{\Delta-2}{r+1}\\
	&\geq&\lceil \nu_{2}/2\rceil.
\end{eqnarray*}
And the theorem holds.\qed	

\begin{corollary}
	Let $(P,\mathcal{L})$ be an $r$-uniform linear system with $\nu_2-1\leq r$ and $\tau=\lceil\nu_{2}/2\rceil$, then $\tau\leq\displaystyle\frac{|P|+|\mathcal{L}|}{r+1}$.
\end{corollary}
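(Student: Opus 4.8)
The plan is to obtain this statement as an immediate consequence of the preceding theorem, since the hypotheses of the corollary are precisely those of the theorem together with the additional assumption $\tau=\lceil \nu_2/2\rceil$. First I would observe that $(P,\mathcal{L})$ is an $r$-uniform linear system satisfying $\nu_2-1\leq r$, so the theorem applies verbatim and yields $\lceil \nu_2/2\rceil\leq\frac{|P|+|\mathcal{L}|}{r+1}$. Then, invoking the hypothesis $\tau=\lceil \nu_2/2\rceil$, I would substitute $\tau$ for $\lceil \nu_2/2\rceil$ on the left-hand side, concluding $\tau\leq\frac{|P|+|\mathcal{L}|}{r+1}$, which is exactly the desired inequality.

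The only genuine content has already been carried out in the proof of the theorem, namely the counting estimates $|P|\geq r\nu_2-\frac{\nu_2(\nu_2-1)}{2}$ and $|\mathcal{L}|\geq \nu_2+\Delta-2$ combined with the constraint $\nu_2-1\leq r$. Consequently there is no real obstacle to overcome in the corollary itself; it is a direct specialization in which the extra hypothesis merely identifies $\tau$ with the quantity already bounded. The one point worth noting is that this identification is consistent with the rest of the setup: Equation (\ref{desigualdad}) guarantees $\lceil \nu_2/2\rceil\leq \tau$ in every linear system, so the assumption $\tau=\lceil \nu_2/2\rceil$ simply pins $\tau$ to its universal lower bound, and no additional verification is required.
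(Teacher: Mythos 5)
Your proof is correct and matches the paper's intent exactly: the corollary is an immediate substitution of the hypothesis $\tau=\lceil\nu_2/2\rceil$ into the preceding theorem's bound, which is why the paper states it without proof. Your added remark that Equation~(\ref{desigualdad}) makes the hypothesis consistent is a nice touch but not needed for the argument.
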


\begin{corollary}\label{coro:principal_2}
	Let $(P,\mathcal{L})$ be an $r$-uniform intersecting linear system with $\Delta=2$, then $\tau\leq\displaystyle
	\frac{|P|+|\mathcal{L}|}{r+1}$.
\end{corollary}

\begin{corollary}
	Let $(P,\mathcal{L})$ be an $r$-uniform intersecting linear system with $\Delta=2$ and $r\geq2$ an even integer. Then $\tau=\displaystyle
	\frac{|P|+|\mathcal{L}|}{r+1}$, if and only if, $r=\nu_2-1$.
\end{corollary}

In \cite{Dorfling}, was proved the following
\begin{theorem}\cite{Dorfling}
	If $(P,\mathcal{L})$ is a linear system with $\Delta=2$ and 
	$\tau=\displaystyle\frac{|P|+|\mathcal{L}|}{r+1}$, then $(P,\mathcal{L})$ is an $(\nu_2-1)$-uniform linear system.	
\end{theorem} 
\section*{Acknowledgement} 
Research supported by SNI and CONACyT.

\end{document}